\definecolor{verylight}{gray}{0.97}
\definecolor{light}{gray}{0.9}
\definecolor{medium}{gray}{0.85}
\definecolor{dark}{gray}{0.6}
\def\frk{\frak}               
\def\Phi{{\frk n}}
\def\Phi{{\frk N}}
\def\MI{{\mathcal I}}
\def\MG{{\mathcal G}}
\def\opn#1#2{\def#1{\operatorname{#2}}} 
\opn\chara{char} \opn\length{\ell} \opn\pd{pd} \opn\rk{rk}
\opn\projdim{proj\,dim} \opn\injdim{inj\,dim} \opn\rank{rank}
\opn\depth{depth} \opn\grade{grade} \opn\height{height}
\opn\embdim{emb\,dim} \opn\codim{codim}
\opn\Tr{Tr} \opn\bigrank{big\,rank}
\opn\superheight{superheight}\opn\lcm{lcm}
\opn\trdeg{tr\,deg}
\opn\reg{reg} \opn\lreg{lreg} \opn\ini{in} \opn\lpd{lpd}
\opn\size{size}\opn\bigsize{bigsize}
\opn\cosize{cosize}\opn\bigcosize{bigcosize}
\opn\sdepth{sdepth}\opn\sreg{sreg}
\opn\link{link}\opn\fdepth{fdepth}
\opn\div{div} \opn\Div{Div} \opn\cl{cl} \opn\Cl{Cl}
\opn\Spec{Spec} \opn\Supp{Supp} \opn\supp{supp} \opn\Sing{Sing}
\opn\Ass{Ass} \opn\Min{Min}\opn\Mon{Mon} \opn\dstab{dstab} \opn\astab{astab}
\opn\Syz{Syz}
\opn\Ann{Ann} \opn\Rad{Rad} \opn\Soc{Soc}
\opn\Im{Im} \opn\Ker{Ker} \opn\Coker{Coker} \opn\Am{Am}
\opn\Hom{Hom} \opn\Tor{Tor} \opn\Ext{Ext} \opn\End{End}
\opn\Aut{Aut} \opn\id{id}
\opn\nat{nat}
\opn\pff{pf}
\opn\Pf{Pf} \opn\GL{GL} \opn\SL{SL} \opn\mod{mod} \opn\ord{ord}
\opn\Gin{Gin} \opn\Hilb{Hilb}\opn\sort{sort}
\opn\S{S}
\opn\aff{aff} \opn\con{conv} \opn\relint{relint} \opn\st{st}
\opn\lk{lk} \opn\cn{cn} \opn\core{core} \opn\vol{vol}
\opn\link{link} \opn\star{star}\opn\lex{lex}\opn\ini{in}
\opn\gr{gr}
\def\pot#1#2{#1[\kern-0.28ex[#2]\kern-0.28ex]}
\opn\dirlim{\underrightarrow{\lim}}
\opn\inivlim{\underleftarrow{\lim}}
\let\to=\rightarrow
\def\Implies{\ifmmode\Longrightarrow \else
        \unskip${}\Longrightarrow{}$\ignorespaces\fi}
\def\implies{\ifmmode\Rightarrow \else
        \unskip${}\Rightarrow{}$\ignorespaces\fi}
\def\iff{\ifmmode\Longleftrightarrow \else
        \unskip${}\Longleftrightarrow{}$\ignorespaces\fi}
\newtheorem{Theorem}{Theorem}[section]
 \newtheorem{Corollary}[Theorem]{Corollary}
 \newtheorem{Proposition}[Theorem]{Proposition}
 \newtheorem{Remark}[Theorem]{Remark}
 \newtheorem{Example}[Theorem]{Example}
\let\epsilon\varepsilon
\let\kappa=\varkappa
\def\qed{\ifhmode\textqed\fi
      \ifmmode\ifinner\quad\qedsymbol\else\dispqed\fi\fi}
\def\textqed{\unskip\nobreak\penalty50
       \hskip2em\hbox{}\nobreak\hfil\qedsymbol
       \parfillskip=0pt \finalhyphendemerits=0}
\def\dispqed{\rlap{\qquad\qedsymbol}}
\opn\dis{dis}
\def\pnt{{\raise0.5mm\hbox{\large\bf.}}}
\opn\Lex{Lex}
\begin{document}
 \title {On the regularity of join-meet ideals of modular lattices}

 \author {Rodica Dinu, Viviana Ene, Takayuki Hibi}

\address{Rodica Dinu, Faculty of Mathematics and Computer Science, University of Bucharest, Str. Academiei 14, Bucharest -- 010014, Romania}
\email{rodica.dinu93@yahoo.com}

\address{Viviana Ene, Faculty of Mathematics and Computer Science, Ovidius University, Bd.\ Mamaia 124,
 900527 Constanta, Romania}\email{vivian@univ-ovidius.ro}

\address{Takayuki Hibi, Department of Pure and Applied Mathematics, Graduate School of Information Science and Technology,
Osaka University, Suita, Osaka 565-0871, Japan}
\email{hibi@math.sci.osaka-u.ac.jp}


 \begin{abstract} We study join-meet ideals associated with modular non-distributive lattices. We give a lower bound for the regularity and show that they are not linearly related.
 \end{abstract}

\thanks{}
\subjclass[2010]{05E40, 13H10, 13D02}
\keywords{Regularity, linear syzygies, lattices, join-meet ideals}

 \maketitle

\section*{Introduction}

Let $L$ be a finite lattice and $K[L]=K[x_a: a\in L]$ be the polynomial ring over the field $K.$ The ideal 
\[
I_L=(f_{ab}:=x_ax_b-x_{a\wedge b}x_{a\vee b}: a,b\in L, a,b \text{ incomparable})\subset K[L]
\]
is called the \textit{join-meet ideal} associated with $L.$ 

It was proved in \cite{Hibi} that $I_L$ is a prime ideal if and only if $L$ is a distributive lattice. In this case, the ring 
$R[L]=K[L]/I_L$ is a normal Cohen-Macaulay domain; see \cite{Hibi}. In  literature, the ring $R[L]$ is known as the Hibi ring 
of the distributive lattice $L$. Hibi rings and the corresponding varieties appear naturally in different combinatorial,  algebraic, and geometric contexts; see \cite{EHH, EHHM15, Hibi, howe,  LB, LM, Wa}. Paper \cite{kim2} surveys the relation between Hibi rings and representation theory.

The starting point of this work was the question whether there exist non-distribu\-tive lattices $L$ such that $I_L$ has a linear resolution. If $L$ is distributive, by  Birkhoff's theorem \cite{B}, it follows that $L$ is the lattice of poset ideals $\MI(P)$ of the poset $P$ consisting of the join-irreducible elements of $L.$ It was proved in \cite{ERQ} that $I_L$ has a linear resolution if and only if 
$L=\MI(P)$ where $P$ is the sum of a chain with an isolated element. This result has been recovered in a later paper \cite{EHM} where it was proved a much stronger result, namely,  $\reg I_L=|P|-\rank P.$ Much less is known about join-meet ideals of non-distributive lattices. As it was observed in \cite{EnHi}, if $L$ is modular, but not distributive, the associated join-meet ideal may be not radical.  

In this paper we consider join-meet ideals of finite modular lattices. We recall that a finite lattice $L$ is  modular if it does not 
contain any sublattice isomorphic to the pentagon lattice of Figure~\ref{pentagon}. For basic properties of lattices, like distributivity and modularity, we refer the reader to the  monographs
\cite{B} and \cite{Sta}.

\begin{figure}[bht]
\begin{center}
\psset{unit=0.5cm}
\begin{pspicture}(-15.3,-2.5)(-1,3.5)

\psline(-9,3)(-11,1)
\psline(-9,3)(-7,2)
\psline(-7,2)(-7,0)
\psline(-11,1)(-9,-1)
\psline(-7,0)(-9,-1)

\rput(-9,3){$\bullet$}
\put(-8.9,3.3){$e$}
\rput(-11,1){$\bullet$}
\put(-11.6,0.9){$b$}
\rput(-7,2){$\bullet$}
\put(-6.7,2){$d$}
\rput(-9,-1){$\bullet$}
\put(-9.1,-1.6){$a$}
\rput(-7,0){$\bullet$}
\put(-6.7,0){$c$}
\end{pspicture}
\end{center}
\caption{Pentagon lattice}\label{pentagon}
\end{figure}
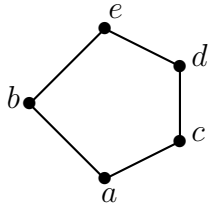

Every modular lattice $L$ possesses a rank function with the property that 
\[\rank a+ \rank b=\rank a\wedge b+ \rank a\vee b, \text{ for all  }a,b\in L.\]

If $L$ is a modular lattice, then $L$ is not distributive if and only if it contains a sublattice isomorphic to the diamond 
lattice of Figure~\ref{diamond}. 
\begin{figure}[bht]
\begin{center}
\psset{unit=0.5cm}
\begin{pspicture}(-15.3,-2.5)(-1,3.5)

\psline(-9,3)(-11,1)
\psline(-9,3)(-9,1)
\psline(-9,3)(-7,1)
\psline(-11,1)(-9,-1)
\psline(-9,1)(-9,-1)
\psline(-7,1)(-9,-1)

\rput(-9,3){$\bullet$}
\put(-9.1,3.3){$e$}
\rput(-11,1){$\bullet$}
\put(-11.6,0.9){$b$}
\rput(-9,1){$\bullet$}
\put(-8.6,0.9){$c$}
\rput(-7,1){$\bullet$}
\put(-6.7,1){$d$}
\rput(-9,-1){$\bullet$}
\put(-9.1,-1.6){$a$}

\end{pspicture}
\end{center}
\caption{Diamond lattice}\label{diamond}
\end{figure}
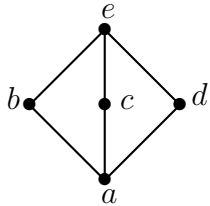

By \cite[Lemma 1.2]{EnHi}, a modular lattice contains a diamond sublattice $L^\prime$ such that 
$\rank \max L^\prime - \rank \min L^\prime =2.$ This result leads to considering diamond-type lattices with an arbitrary number 
of elements, say $y_1,\ldots,y_n$ in between the maximal and the minimal elements. We denote such a lattice by $D_{n+2}.$  In Section~\ref{one} we study the join-meet ideal of $D_{n+2.}$ We show that this ideal is Gorenstein (Theorem~\ref{Gorenstein}). Moreover, we   compute its regularity (Proposition~\ref{CM}) and show
that it is not linearly related (Proposition~\ref{notlin}). An equigenerated graded ideal $I$ in a polynomial ring $S$ is called \textit{linearly related} if  it has linear relations. 

In Section~\ref{two}, we show that if $L$ is a finite lattice and $L^\prime$ is an \textit{induced sublattice} of $L$ (see Section~\ref{two} for the definition), 
then $K[L^\prime]/I_{L^\prime}$ is an algebra retract of $K[L]/I_L.$ This leads to the main result of this paper, Theorem~\ref{mainthm}, which states that  if $L$ is a modular non-distributive lattice, then $\reg(K[L]/I_L)\geq 3$ and, moreover, $I_L$ is not linearly related. 
In particular, the join-meet ideal of a modular non-distributive lattice does not have a linear resolution.

Therefore, combining Theorem~\ref{mainthm} with \cite[Corollary 10]{ERQ} or \cite[Corollary 1.2]{EHM}, it follows that if $L$ is a finite modular lattice, then $I_L$ has a linear resolution if and only if $L$ is distributive 
and the poset of join-irreducible elements of $L$ is the sum of a chain with an isolated element.

In addition, Theorem~\ref{mainthm} shows that, for a modular lattice $L$, if $I_L$ is linearly related, then $L$ must be distributive. The planar distributive lattices with linearly related join-meet ideals are completely classified in \cite[Theorem 4.12]{En}. On the other hand, there are non-planar distributive lattices whose join-meet ideals are linearly related. For example, if $L=B_n$ is the Boolean 
lattice of rank $n,$ it is easily seen that the associated Hibi ring $R[L]=K[L]/I_L$ is in fact the Segre product of $n$ polynomials rings in two variables. In \cite[Theorem 6]{Ru} it was shown that the defining ideal of this Segre product is linearly related. Consequently, 
$I_L$ is linearly related for $L=B_n,$ $n\geq 2.$

However, classifying all the distributive lattices whose join-meet ideal is linearly related remains open.

\section*{Acknowledgement}
We thank Mateusz Micha\l ek for drawing our attention to the paper \cite{Ru} and Florin Ambro for valuable discussions. 
We  gratefully acknowledge the use of the Singular  software \cite{GPS}  for  computer experiments.

\section{Diamond lattice $D_{n+2}$}
\label{one}

 Let us consider the diamond lattice $D_{n+2}, n\geq 3,$ with  elements $x> y_1,\ldots,y_n>z.$ In Figure \ref{D7}, we 
displayed the diamond lattice $D_7.$

\begin{figure}[hbt]
\begin{center}
\psset{unit=1cm}
\begin{pspicture}(-2,0)(3,2)
\rput(0,0){$\bullet$}
\rput(0,1){$\bullet$}
\rput(0,2){$\bullet$}
\rput(1,1){$\bullet$}
\rput(2,1){$\bullet$}
\rput(-1,1){$\bullet$}
\rput(-2,1){$\bullet$}
\psline(0,0)(0,1)
\psline(0,1)(0,2)
\psline(0,0)(1,1)
\psline(0,0)(2,1)
\psline(0,0)(-1,1)
\psline(0,0)(-2,1)
\psline(0,2)(0,1)
\psline(0,2)(1,1)
\psline(0,2)(2,1)
\psline(0,2)(-1,1)
\psline(0,2)(-2,1)
\end{pspicture}
\end{center}
\caption{Diamond lattice $D_7$}\label{D7}
\end{figure}
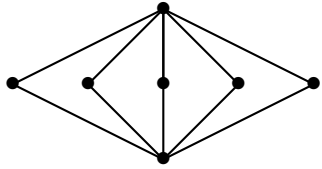

Let $K$ be a field.
The join-meet ideal associated to the diamond lattice is
$$
I_{D_{n+2}}= (y_iy_j-xz: 1\leq i<j \leq n)\subseteq K[x, y_1,\ldots, y_n, z].
$$
We denote it simply by $I$ and, also, we denote $S=K[x, y_1,\ldots, y_n, z]$ the ring of polynomials in $n+2$ indeterminates over $K.$ 
We consider the following order of the variables $x>y_1>\cdots>y_n>z$.

We set $f_{ij}:=y_iy_j-xz$  for $1\leq i<j\leq n$, $g_i:=xz(y_i-y_n)$ for $1\leq i\leq n-1,$ and $h:=xz(y_n^2-xz)$.

\begin{Proposition} The reduced Gr\"obner basis of $I$ with respect to the reverse lexicographic order induced by $x>y_1>y_2>\cdots >y_n>z$ is
$$
\mathcal{G}=\{f_{ij}, g_k, h: 1\leq i<j\leq n, 1\leq k\leq n-1\}.
$$
\end{Proposition}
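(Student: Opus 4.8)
The natural approach is to verify directly that $\mathcal{G}$ is a Gr\"obner basis of $I$ via Buchberger's criterion, and then to check that it is reduced. I would begin with three preliminaries. First, $g_k$ and $h$ lie in $I$: for any index $i\in\{1,\dots,n\}\setminus\{k,n\}$ (which exists because $n\geq 3$) one has the identities
\[
g_k=y_n f_{ki}-y_k f_{ni},\qquad h=xz\,f_{in}-y_n g_i,
\]
so that $I=(\mathcal{G})$. Second, with respect to the reverse lexicographic order induced by $x>y_1>\cdots>y_n>z$ the initial terms are
\[
\ini(f_{ij})=y_iy_j\ \ (i<j),\qquad \ini(g_k)=xzy_k\ \ (1\leq k\leq n-1),\qquad \ini(h)=xzy_n^2,
\]
each with coefficient $1$. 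Third, the $S$-polynomial of any pair of elements of $\mathcal{G}$ with coprime initial terms reduces to $0$ by the product criterion; this disposes of $S(f_{ij},f_{kl})$ with $\{i,j\}\cap\{k,l\}=\emptyset$, of $S(f_{ij},g_k)$ with $k\notin\{i,j\}$, and of $S(f_{ij},h)$ with $n\notin\{i,j\}$.

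It then remains to treat the pairs whose initial terms share a variable. Up to the symmetry $f_{ij}=f_{ji}$ these are: $S(f_{ai},f_{bi})$ with common index $i$; $S(f_{bi},g_i)$ with $1\leq i\leq n-1$ and $b\neq i$; $S(f_{an},h)$ with $1\leq a\leq n-1$; $S(g_k,g_l)$ with $k\neq l$; and $S(g_k,h)$. A short computation yields (with the convention $g_n:=0$)
\[
S(f_{ai},f_{bi})=g_a-g_b,\qquad S(f_{an},h)=xz\,g_a,\qquad S(g_k,g_l)=y_n(g_k-g_l),
\]
\[
S(f_{bi},g_i)=\begin{cases} xz\,f_{bn}, & b<n,\\[1mm] h, & b=n,\end{cases}\qquad\qquad S(g_k,h)=xz\,g_k-y_n h.
\]
In each case the right-hand side is exhibited explicitly as a combination of elements of $\mathcal{G}$, and one checks by inspection that the division algorithm applied to each $S$-polynomial against $\mathcal{G}$ terminates at $0$ after at most three steps (for instance the reductions of $S(g_k,g_l)$ and of $S(g_k,h)$ pass through $h$, respectively through $y_n h$, before closing up). By Buchberger's criterion, $\mathcal{G}$ is a Gr\"obner basis of $I$.

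Finally, $\mathcal{G}$ is reduced: each of its elements is monic in its leading term; the leading monomials $y_iy_j$ $(i<j)$, $xzy_k$ $(k\leq n-1)$ and $xzy_n^2$ are pairwise non-comparable under divisibility; and none of the trailing monomials $xz$ (of $f_{ij}$), $xzy_n$ (of $g_k$), $x^2z^2$ (of $h$) is divisible by any of these leading monomials --- which is immediate, since each of $xz$, $xzy_n$, $x^2z^2$ involves at most one of the variables $y_1,\dots,y_n$. Since the reduced Gr\"obner basis is unique, $\mathcal{G}$ is the one claimed.

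I do not expect a genuine obstacle here: the whole argument is a finite verification. The only point requiring some care is the bookkeeping in reducing the non-coprime $S$-pairs, since several of them do not reduce in a single step but pass through $h$ and through quadratic multiples of $xz$; recording the identities above makes these reductions transparent.
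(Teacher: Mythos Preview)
Your proof is correct and follows the same approach as the paper: verify that $\mathcal{G}\subset I$, check Buchberger's criterion, and observe the basis is reduced. The paper merely asserts that the $S$-polynomial computations are straightforward, whereas you actually carry them out; your displayed identities are all correct standard representations. One tiny slip: in the parenthetical remark, the reduction of $S(g_k,g_l)=y_n(g_k-g_l)$ does not pass through $h$ --- after reducing by $y_n g_k$ the leading term of the remainder $xzy_n^2-xzy_ly_n$ is $xzy_ly_n$ (not $xzy_n^2$), so the next step uses $g_l$, not $h$ --- but this does not affect the argument.
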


\begin{proof}
We apply the Buchberger's Algorithm. Obviously, $\MG$ generates $I$. Straightforward calculation shows that all the $S$-polynomials reduce to $0$ with respect to $\MG,$ thus $\MG$ is a Gr\"obner basis of $I.$ Moreover, $\MG$ is obviously reduced.

\end{proof}

\begin{Corollary} The initial ideal of $I$ with respect to the reverse lexicographic order is 
 \[\ini_{<}(I)=(y_iy_j, xy_kz, xy_n^2z: 1\leq i<j\leq n, 1\leq k\leq n-1).\]
\end{Corollary}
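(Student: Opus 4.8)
The plan is to read the result off directly from the preceding Proposition. Since $\mathcal{G}=\{f_{ij},g_k,h:1\le i<j\le n,\ 1\le k\le n-1\}$ is a Gr\"obner basis of $I$ with respect to the reverse lexicographic order $<$ induced by $x>y_1>\cdots>y_n>z$, the initial ideal $\ini_<(I)$ is generated by the leading monomials of the elements of $\mathcal{G}$. So the only task is to determine $\ini_<(f_{ij})$, $\ini_<(g_k)$, and $\ini_<(h)$.

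I would do this by the usual rule for the reverse lexicographic order: among two monomials of the same degree, the larger one is the one for which the rightmost nonzero entry of the difference of exponent vectors is negative, where ``rightmost'' refers to the smallest variable $z$. For $f_{ij}=y_iy_j-xz$ the difference of exponent vectors has $-1$ in the $z$-slot, so $\ini_<(f_{ij})=y_iy_j$. For $g_k=xy_kz-xy_nz$ with $k\le n-1$ the two terms share the same $x$- and $z$-exponents and differ only in the $y_n$-slot, which is negative for $xy_kz$ minus $xy_nz$, so $\ini_<(g_k)=xy_kz$. For $h=xy_n^2z-x^2z^2$ the $z$-exponent of the difference is $1-2=-1<0$, so $\ini_<(h)=xy_n^2z$. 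Collecting these monomials yields exactly $(y_iy_j,\ xy_kz,\ xy_n^2z:1\le i<j\le n,\ 1\le k\le n-1)$, which is the asserted initial ideal.

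There is no genuine obstacle here; the only point requiring a moment's care is the generator $h$, where the smallest variable $z$ appears in both monomials, so the revlex comparison cannot stop at the $z$-slot but must take into account the remaining exponents. Once the three leading terms are pinned down, the Corollary is immediate.
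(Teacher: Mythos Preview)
Your proposal is correct and is precisely the intended argument: the paper states the Corollary without proof because it follows immediately from the preceding Proposition by reading off the leading monomials of the Gr\"obner basis elements $f_{ij}$, $g_k$, $h$. Your leading-term computations are all right.

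One small inconsistency worth cleaning up: in your final paragraph you say that for $h$ ``the revlex comparison cannot stop at the $z$-slot but must take into account the remaining exponents.'' That contradicts your own (correct) computation two sentences earlier, where you noted that the $z$-exponents of $xy_n^2z$ and $x^2z^2$ are $1$ and $2$, so the difference in the $z$-slot is $-1\neq 0$ and the revlex comparison \emph{does} terminate there. The generator where the $z$-exponents coincide and one must pass to the $y_n$-slot is $g_k$, not $h$. This is only a slip in the commentary; the proof itself is fine.
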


\begin{Proposition}\label{Hilbert} The Hilbert series of $S/I$ and $S/\ini_{<}(I)$ is
$$
\Hilb_{S/I}(t)=\Hilb_{S/\ini_<(I)}=\frac{1+nt+nt^2+t^3}{(1-t)^2}.
$$
\end{Proposition}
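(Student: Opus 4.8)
Since passing to an initial ideal with respect to any term order leaves the Hilbert function unchanged, the two series in the statement coincide, and it suffices to compute $\Hilb_{S/\ini_<(I)}(t)$. By the preceding Corollary we have the explicit monomial ideal $\ini_<(I)=(y_iy_j,\ xy_kz,\ xy_n^2z: 1\le i<j\le n,\ 1\le k\le n-1)$, so the whole problem reduces to counting, degree by degree, the standard monomials, i.e.\ the monomials of $S$ divisible by no generator of $\ini_<(I)$.

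First I would sort the standard monomials according to which of $y_1,\dots,y_n$ occurs in them. Because $y_iy_j\in\ini_<(I)$ for all $i<j$, a standard monomial contains at most one $y$-variable. If it contains none, it is an arbitrary monomial in $x,z$. If it contains $y_k$ to a positive power with $1\le k\le n-1$, then $xy_kz\in\ini_<(I)$ forces either $x$ or $z$ to be absent. If it contains $y_n$ to the first power there is no further restriction, while if it contains $y_n^b$ with $b\ge 2$, then $xy_n^2z$ again forces $x$ or $z$ to be absent. Collecting these cases (and making the mild inclusion--exclusion correction needed so that the pure powers of a single $y_k$ are not double-counted) yields a Stanley decomposition
\[
S/\ini_<(I)\;=\;K[x,z]\ \oplus\ \bigoplus_{k=1}^{n-1}\Bigl(y_kK[y_k,z]\ \oplus\ xy_kK[x,y_k]\Bigr)\ \oplus\ y_nK[x,z]\ \oplus\ y_n^2K[y_n,z]\ \oplus\ xy_n^2K[x,y_n]
\]
as graded $K$-vector spaces, where each summand is the span of the monomials of the displayed shape.

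Then I would simply add up the Hilbert series of the summands: $K[x,z]$ contributes $1/(1-t)^2$; each of the $n-1$ pairs $y_kK[y_k,z]\oplus xy_kK[x,y_k]$ contributes $(t+t^2)/(1-t)^2$; and the three pieces $y_nK[x,z]$, $y_n^2K[y_n,z]$, $xy_n^2K[x,y_n]$ contribute $t/(1-t)^2$, $t^2/(1-t)^2$, $t^3/(1-t)^2$, respectively. Summing gives $\bigl(1+(n-1)(t+t^2)+t+t^2+t^3\bigr)/(1-t)^2=(1+nt+nt^2+t^3)/(1-t)^2$, as claimed. The only point requiring real attention is the bookkeeping in the decomposition: the top atom $y_n$ carries a different exponent threshold ($2$ rather than $1$) than $y_1,\dots,y_{n-1}$, and one must verify that after the inclusion--exclusion the listed subspaces are genuinely in direct sum; the remainder is a routine summation of geometric series.
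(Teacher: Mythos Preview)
Your argument is correct. The Stanley decomposition you write down really does partition the set of standard monomials for $\ini_<(I)$: the pieces are distinguished first by which (if any) $y$-variable appears, then by whether the $x$-exponent is zero or positive, and in the $y_n$-case additionally by whether the $y_n$-exponent is $1$ or $\ge 2$. The series then add up exactly as you say.

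This is, however, a genuinely different route from the paper's. The paper does not count standard monomials directly; instead it uses the short exact sequence
\[
0\;\longrightarrow\; \frac{S}{\ini_<(I):xz}(-2)\;\xrightarrow{\ \cdot xz\ }\;\frac{S}{\ini_<(I)}\;\longrightarrow\;\frac{S}{(\ini_<(I),xz)}\;\longrightarrow\;0,
\]
computes $\ini_<(I):xz=(y_1,\dots,y_{n-1},y_n^2)$ and $(\ini_<(I),xz)$ as a tensor product $K[y_1,\dots,y_n]/(y_iy_j)\otimes_K K[x,z]/(xz)$, and adds the two resulting Hilbert series. Your approach is more elementary and self-contained; the paper's has the advantage that the same exact sequence is immediately reused in the next proposition to prove that $S/\ini_<(I)$ is Cohen--Macaulay of dimension~$2$ (both ends are visibly CM of dimension~$2$), a conclusion your Stanley decomposition does not deliver without further work.
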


\begin{proof} It is well known that $S/I$ and $S/\ini_<(I)$ have the same Hilbert series; see, for example, \cite[Corollary 6.1.5]{HH10}. 
In order to compute the Hilbert series of $S/\ini_<(I)$, we consider the following exact sequence:

\begin{equation}\label{sir0}
0\rightarrow \frac{S}{(\ini_{<}(I):xz)}(-2) \rightarrow \frac{S}{\ini_{<}(I)} \rightarrow \frac{S}{(\ini_{<}(I),xz)} \rightarrow 0,
\end{equation}
where the first non-zero map is the multiplication by $xz.$
Then \[\Hilb_{\frac{S}{\ini_{<}(I)}}(t)=t^2 \Hilb_{\frac{S}{(\ini_{<}(I):xz)}}(t)+\Hilb_{\frac{S}{(\ini_{<}(I),xz)}}(t).\]

 Since $S/(\ini_{<}(I):xz)= S/(y_1, y_2,\ldots,y_{n-1}, y_n^2)$ and $y_1, y_2,\ldots,y_{n-1}, y_n^2$ is a regular sequence in $S$, 
it follows that \[\Hilb_{\frac{S}{(\ini_{<}(I):xz)}}(t)=\frac{(1-t)^{n-1}(1-t^2)}{(1-t)^{n+2}}=\frac{1+t}{(1-t)^2}.\]

On the other hand, 
\begin{equation}\label{eq2}
\frac{S}{(\ini_<(I),xz)}=\frac{K[y_1,\ldots,y_n]}{(y_iy_j: 1\leq i<j\leq n)}\otimes_K\frac{K[x,z]}{(xz)},
\end{equation}
which implies that \[\Hilb_{\frac{S}{(\ini_{<}(I),xz)}}(t)=\left(1+n\frac{t}{1-t}\right)\frac{1+t}{1-t}=\frac{1+nt+(n-1)t^2}{(1-t)^2.}\]

Therefore, \[\Hilb_{\frac{S}{\ini_{<}(I)}}(t)=t^2\frac{1+t}{(1-t)^2}+\frac{1+nt+(n-1)t^2}{(1-t)^2}=\frac{1+nt+nt^2+t^3}{(1-t)^2}.\]
\end{proof}

\begin{Proposition}\label{CM}
\begin{itemize}
\item [{\em (i)}] The ideals $I$ and $\ini_{<}(I)$ are Cohen-Macaulay of dimension $2$.
\item [{\em (ii)}] We have $\reg(S/I)=\reg(S/\ini_<(I))=3.$
\item [{\em (iii)}] We have $\beta_{n,n+3}(S/I)=\beta_{n,n+3}(S/\ini_<(I))=1.$
\end{itemize}
\end{Proposition}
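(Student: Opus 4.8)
The plan is to prove everything first for $S/\ini_<(I)$ by reusing the exact sequence \eqref{sir0} together with Proposition \ref{Hilbert}, and then to transfer the conclusions to $S/I$ through the standard comparison between a graded ideal and its initial ideal ($\depth S/I\ge\depth S/\ini_<(I)$, same Hilbert function, $\reg S/I\le\reg S/\ini_<(I)$, $\beta_{i,j}(S/I)\le\beta_{i,j}(S/\ini_<(I))$). For (i): by Proposition \ref{Hilbert} the Hilbert series of each ring has a pole of order $2$ at $t=1$ and numerator $1+nt+nt^2+t^3$, which does not vanish at $t=1$, so both have Krull dimension $2$. In \eqref{sir0} the left module is $S/(y_1,\dots,y_{n-1},y_n^2)$, a complete intersection, hence Cohen-Macaulay of dimension $2$; by \eqref{eq2} the right module is $K[y_1,\dots,y_n]/(y_iy_j\colon i<j)\otimes_K K[x,z]/(xz)$, a tensor product over $K$ of two Cohen-Macaulay algebras --- the first factor is the Stanley-Reisner ring of $n$ isolated points, which is reduced and equidimensional of dimension $1$ and hence Cohen-Macaulay, and the second is a hypersurface --- so it is Cohen-Macaulay of dimension $2$. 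As $H^i_\mm$ vanishes for $i\le 1$ on both outer terms of \eqref{sir0}, the long exact sequence in local cohomology yields $H^i_\mm(S/\ini_<(I))=0$ for $i\le 1$, so $S/\ini_<(I)$ is Cohen-Macaulay; then so is $S/I$, since $\depth S/I\ge 2=\dim S/I$.

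For (ii) I would invoke that a graded Cohen-Macaulay quotient $R$ of $S$ satisfies $\reg R=a(R)+\dim R$, where $a(R)=\deg h_R-\dim R$ and $h_R$ is the numerator of the Hilbert series over $(1-t)^{\dim R}$; by Proposition \ref{Hilbert}, $h_R=1+nt+nt^2+t^3$ and $\dim R=2$ for $R\in\{S/I,\,S/\ini_<(I)\}$, hence $\reg R=(3-2)+2=3$. For (iii), Auslander-Buchsbaum gives $\projdim_S R=(n+2)-2=n$, so $\beta_{n,\bullet}$ is the top strand of the resolution. Dualizing the minimal free resolution of such an $R$ (Cohen-Macaulay of codimension $n$) shows $\beta_{n,j}(R)=\dim_K(\omega_R/\mm\omega_R)_{n+2-j}$, where $\omega_R=\Ext^n_S(R,S)(-n-2)$ is graded-isomorphic to the Matlis dual of $H^2_\mm(R)$. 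Since $a(R)=1$, $\omega_R$ is concentrated in degrees $\ge-1$, so $(\omega_R)_{-1}$ consists entirely of minimal generators, and therefore
\[
\beta_{n,n+3}(R)=\dim_K(\omega_R)_{-1}=\dim_K H^2_\mm(R)_1=\dim_K R_1-P_R(1)=(n+2)-(n+1)=1 ,
\]
using the Grothendieck-Serre formula for the Cohen-Macaulay module $R$ of dimension $2$, with $P_R$ the Hilbert polynomial read off from Proposition \ref{Hilbert}. Applying this to $S/I$ and to $S/\ini_<(I)$ gives (iii), and in particular re-confirms $\reg\ge 3$ in (ii).

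The step I expect to be most delicate is (iii): one must handle carefully the graded shifts in dualizing the resolution and in passing among $H^2_\mm(R)$, $\omega_R$ and the Hilbert function, and one needs the exact value $a(R)=1$ (not merely $a(R)\le 1$) to know that the degree $-1$ component of $\omega_R$ is a minimal-generator strand. A more hands-on alternative for $S/\ini_<(I)$ is to run the $\Tor$ long exact sequence of \eqref{sir0} in homological degree $n$ and internal degree $n+3$: the left term contributes $1$ (the top of the Koszul complex of the complete intersection, shifted by $2$) and the right term contributes $0$ (in homological degree $n$ its resolution is concentrated in internal degree $n+2$), which gives $\beta_{n,n+3}(S/\ini_<(I))=1$; this still leaves $\beta_{n,n+3}(S/I)$, for which the canonical-module computation above is the most economical.
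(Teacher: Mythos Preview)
Your proof is correct and follows the paper's approach: parts (i) and (ii) are argued exactly as in the paper (Hilbert series gives dimension $2$; both ends of \eqref{sir0} are Cohen--Macaulay of dimension $2$, hence so is the middle; regularity is the degree of the $h$-polynomial for a Cohen--Macaulay quotient). For (iii) the paper simply cites the standard fact that the top extremal Betti number of a Cohen--Macaulay quotient equals the leading coefficient $h_s$ of the $h$-polynomial, whereas you unpack this via the canonical module and Grothendieck--Serre --- a bit more work but entirely self-contained, and mathematically the same identification $\beta_{n,n+3}=h_3=1$.
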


\begin{proof}
(i). The formula of the Hilbert series of $S/I$ from Proposition~\ref{Hilbert} shows that $\dim (S/I)=\dim (S/\ini_<(I))=2. $
By \cite[Corollary 3.3.5]{HH10}, it is enough to show that $S/\ini_<(I)$ is Cohen-Macaulay. But this follows easily from the exact sequence 
(\ref{sir0}) since the ends are Cohen-Macaulay of dimension $2$, therefore the middle term is also Cohen-Macaulay. Indeed, the left end in (\ref{sir0}) is even a complete intersection, while the right end is the tensor product from (\ref{eq2}) where the second factor is obviously Cohen-Macaulay and the first factor is  the Stanley-Reisner ring of the simplicial complex whose facets are $\{y_1\},\ldots, \{y_n\}$ which is Cohen-Macaulay.

(ii). Standard arguments for Cohen-Macaulay algebras show that  $\reg (S/I)=\reg(S/\ini_<(I))=3$ since the degree of the numerator of the Hilbert series of $S/I$ is equal to $3$; for more explanation on how to compute the regularity of a standard graded algebra  see, for example, \cite[Subsection 3.1]{En}.

(iii). The highest shift in the resolutions of $S/I$ and $S/\ini_<(I)$ appears in homological degree $n$ and $\beta_{n,n+3}(S/I)=\beta_{n,n+3}(S/\ini_<(I))=1$ since the leading coefficient  in the numerator of the Hilbert series is equal to $1$; see the discussion in \cite[Section 1]{EHHM15}.
\end{proof}

\begin{Remark}\label{Rem1}{\em 
One may easily observe that $\ini_<(I)$ has linear quotients if we order its minimal generators as follows:
\[y_1y_2,y_1y_3,\ldots, y_1y_n,y_2y_3,\ldots, y_2y_n,\ldots, y_{n-1}y_n,xy_1z,xy_2z,\ldots,xy_{n-1}z,xy_n^2z.\]

By using \cite[Exercise 8.8]{HH10}, one may compute all the graded Betti numbers of $S/\ini_<(I)$. 

In what follows, we will exploit only that $\beta_{n,n+1}(S/\ini_<(I))=0.$ 
Indeed, by \cite[Exercise 8.8]{HH10}, we have 
\[\beta_{i+1,i+j}(S/\ini_<(I))=\sum_{k, \deg f_k=j}\binom{r_k}{i}\] where $f_1,\ldots,f_m$ are the generators of $\ini_<(I)$ with
 $m=n(n+1)/2$ and, for each $k,$ $r_k$ denotes the number of variables which generate the ideal quotient $(f_1,\ldots,f_{k-1}):f_k.$ 
Then, for $i=n-1$ and $j=2,$ we get 
\[\beta_{n,n+1}(S/\ini_<(I))=\sum_{k, \deg f_k=2}\binom{r_k}{n-1}.\] One easily sees that, for every $k$ with  $\deg f_k=2,$  the ideal quotient 
$(f_1,\ldots,f_{k-1}):f_k$ is generated by at most $n-2$ variables, therefore $\beta_{n,n+1}(S/\ini_<(I))=0.$

Note that, by using similar arguments, we get 
\[\beta_{n,n+2}(S/\ini_<(I))=\sum_{k, \deg f_k=3}\binom{r_k}{n-1}=\sum_{k, \deg f_k=3}\binom{n-1}{n-1}=n-1.\]
}
\end{Remark}

\begin{Theorem}\label{Gorenstein}
The ideal $I$ is  Gorenstein.
\end{Theorem}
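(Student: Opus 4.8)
The strategy is to use that, by Proposition~\ref{CM}, $S/I$ is Cohen--Macaulay of dimension $2$, hence $\pd_S(S/I)=n$ by Auslander--Buchsbaum, and that a Cohen--Macaulay standard graded $K$-algebra is Gorenstein exactly when its type $\beta_n(S/I)=\sum_j\beta_{n,j}(S/I)$ is $1$. Since $I$ is generated in degree $2$ the minimal twists in the resolution strictly increase, and since $\reg(S/I)=3$ (Proposition~\ref{CM}) the last free module $F_n$ can only involve $S(-n-1),S(-n-2),S(-n-3)$. By semicontinuity of graded Betti numbers under Gröbner degeneration, $\beta_{n,n+1}(S/I)\le\beta_{n,n+1}(S/\ini_<(I))=0$ (Remark~\ref{Rem1}), while $\beta_{n,n+3}(S/I)=1$ by Proposition~\ref{CM}. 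Hence $\beta_n(S/I)=1+\beta_{n,n+2}(S/I)$, and the whole theorem reduces to proving $\beta_{n,n+2}(S/I)=0$.

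The first substantial step is to compute $I:xz$. Put $J=(y_1-y_n,\ldots,y_{n-1}-y_n,\,y_n^2-xz)$. Since $xz(y_i-y_n)=g_i\in I$ and $xz(y_n^2-xz)=h\in I$ (members of $\mathcal{G}$), we have $J\subseteq I:xz$; conversely $S/J\cong K[x,y_n,z]/(y_n^2-xz)$ is a domain, so $J$ is prime, and comparing Hilbert series in
\[
0\longrightarrow(S/(I:xz))(-2)\xrightarrow{\ \cdot xz\ }S/I\longrightarrow S/(I,xz)\longrightarrow 0
\]
— using $\Hilb_{S/I}$ from Proposition~\ref{Hilbert} and the equality $\Hilb_{S/(I,xz)}=\Hilb_{S/(\ini_<(I),xz)}$ (and the value of the latter computed in the proof of Proposition~\ref{Hilbert}) — gives $\Hilb_{S/(I:xz)}(t)=(1+t)/(1-t)^2=\Hilb_{S/J}(t)$, so the surjection $S/J\twoheadrightarrow S/(I:xz)$ is an isomorphism and $I:xz=J$. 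In particular $y_1-y_n,\ldots,y_{n-1}-y_n,y_n^2-xz$ is a regular sequence, so $(S/J)(-2)$ is resolved by the twisted Koszul complex; and by (\ref{eq2}), $S/(I,xz)\cong K[x,z]/(xz)\otimes_K K[y_1,\ldots,y_n]/(y_iy_j:i<j)$ is Cohen--Macaulay of dimension $2$ and regularity $2$, whose graded Betti numbers follow from the Künneth formula together with $0\to K[x,z](-2)\to K[x,z]\to K[x,z]/(xz)\to0$ and the (linear) resolution of the edge ideal $(y_iy_j:i<j)$ of the complete graph on $n$ vertices; in particular $\beta_{n,j}(S/(I,xz))=0$ for $j\ne n+2$ and $\beta_{n,n+2}(S/(I,xz))=n-1$.

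Feeding the displayed short exact sequence into the long exact sequence of $\Tor^S_\bullet(-,K)$ in internal degree $n+2$, and using $\beta_{n,n+2}((S/J)(-2))=\beta_{n,n}(S/J)=0$ (the top of the Koszul complex of $J$ lies in degree $n+1$) together with $\beta_{n-1,n+2}(S/(I,xz))=0$, one obtains a four-term exact sequence
\[
0\to\Tor^S_n(S/I,K)_{n+2}\to\Tor^S_n(S/(I,xz),K)_{n+2}\xrightarrow{\ \delta\ }\Tor^S_{n-1}((S/J)(-2),K)_{n+2}\to\Tor^S_{n-1}(S/I,K)_{n+2}\to0,
\]
in which both middle terms have dimension $n-1$ — the first from the Künneth computation, the second because $\Tor^S_{n-1}(S/J,K)_n$ is spanned by the Koszul cycles using $y_n^2-xz$ together with $n-2$ of the $n-1$ linear generators $y_\ell-y_n$. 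Thus $\beta_{n,n+2}(S/I)=(n-1)-\rank\delta$, and \emph{the entire theorem now comes down to proving that the connecting map $\delta$ is an isomorphism} — the step I expect to be the real work. I would establish it by a chain-level computation on the Koszul complex of the $n+2$ variables: transport the basis $\varepsilon\otimes([v_i]-[v_n])$ ($1\le i\le n-1$) of $\Tor^S_n(S/(I,xz),K)_{n+2}$ coming from the tensor factorization — $\varepsilon$ spanning $\Tor_1(K[x,z]/(xz),K)$ and the $[v_i]-[v_n]$ spanning $\widetilde{H}_0$ of the $n$ isolated points — through $\delta$, and check that each such class is carried to $xz$ times the Koszul cycle built from $y_n^2-xz$ and $\{y_\ell-y_n:\ell\ne i\}$, so that $\delta$ is, up to signs, the identity matrix in these bases. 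Equivalently, in the mapping cone resolving $S/I$ furnished by the displayed sequence, the comparison map in homological degrees $n-1,n$ has an $(n-1)\times(n-1)$ scalar block equal to $\delta$; once it is invertible the $n-1$ copies of $S(-n-2)$ in homological degree $n$ cancel, leaving $F_n=S(-n-3)$, whence $\beta_n(S/I)=1$ and $S/I$ is Gorenstein.
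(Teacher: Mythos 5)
Your reduction of the theorem to $\beta_{n,n+2}(S/I)=0$ is correct and matches the paper (same use of Remark~\ref{Rem1} and of the semicontinuity $\beta_{n,n+1}(S/I)\le\beta_{n,n+1}(S/\ini_<(I))=0$, same appeal to $\beta_{n,n+3}(S/I)=1$). Your identification $I:xz=(y_1-y_n,\ldots,y_{n-1}-y_n,y_n^2-xz)$ via primeness plus the Hilbert-series comparison is also correct (and is the same fact the paper uses, obtained there from \cite[Lemma 12.1]{S}), and your dimension counts for the two middle terms of the four-term sequence are right. The problem is that your route genuinely requires the connecting homomorphism $\delta$ to be an isomorphism between two $(n-1)$-dimensional spaces, and you do not prove this: you state what you ``expect'' the chain-level computation to give and assert that $\delta$ is ``up to signs, the identity matrix.'' That is precisely the decisive cancellation, it is not automatic (a priori $\rank\delta$ could be anything between $0$ and $n-1$, and each defect contributes a copy of $S(-n-2)$ to $F_n$), and there is no cheap way around it inside your setup --- for instance, deducing surjectivity of $\delta$ from $\beta_{n-1,n+2}(S/I)=0$ is circular, since that vanishing is itself a consequence of the Gorenstein symmetry you are trying to establish, and the upper bound $\beta_{n-1,n+2}(S/\ini_<(I))$ is $n-1$, not $0$. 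As written, the proof is incomplete at its central step.

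The paper's proof shows how to avoid ever computing a connecting map: instead of coloning by $xz$ in one step, it peels off the two variables successively. First, from $0\to (S/(I:z))(-1)\to S/I\to S/(I,z)\to 0$ one gets $\Tor_n(S/I,K)_{n+2}$ squeezed between $\Tor_n(S/(I:z),K)_{n+1}$ and $\Tor_n(S/(I,z),K)_{n+2}$; the right-hand term vanishes because $(I,z)$ is (up to the free variable $x$) the edge ideal of the complete graph, which has a linear resolution. Then a second sequence $0\to (S/((I:z):x))(-1)\to S/(I:z)\to S/(I:z,x)\to 0$ kills $\Tor_n(S/(I:z),K)_{n+1}$, because $(I:z):x=I:xz$ is the complete intersection you found (whose Koszul resolution ends in degree $n+1$, so its $\Tor_n$ vanishes in degree $n$) and $S/(I:z,x)$ has projective dimension $n-1$. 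In each of the two long exact sequences both outer terms vanish in the relevant internal degree, so no boundary map needs to be analyzed. If you want to salvage your one-step version, you must actually carry out the comparison-of-resolutions computation for $\delta$; otherwise I recommend restructuring along the two-step d\'evissage.
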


\begin{proof}
In order to prove that $\frac{S}{I}$ is a Gorenstein ring, we only need to show that $\beta_{n,n+1}(S/I)=\beta_{n,n+2}(S/I)=0,$ since, by 
Proposition~\ref{CM}, we already know that $\beta_{n,n+3}(S/I)=1.$ By Remark~\ref{Rem1}, we have $\beta_{n,n+1}(S/\ini_<(I))=0.$ Since
$\beta_{n,n+1}(S/I) \leq \beta_{n,n+1}(S/\ini_<(I)),$ we get the desired conclusion for $\beta_{n,n+1}(S/I).$

Therefore, it remains to show that $\beta_{n,n+2}(S/I)=0.$

Let $J=I:z.$ Let us consider the following exact sequence of graded $S$--modules:
\begin{equation}\label{sir1}
0\rightarrow \frac{S}{J}(-1) \stackrel{z}{\rightarrow} \frac{S}{I} \rightarrow \frac{S}{(I,z)} \rightarrow 0.
\end{equation}

We know that $S/I$ is Cohen-Macaulay of $\dim (S/I)=2$ by Proposition~\ref{CM}. On the other hand, 
$S/(I,z)\cong K[x,y_1,\ldots,y_n]/(y_iy_j: 1\leq i<j\leq n)$. The ring $K[y_1,\ldots,y_n]/(y_iy_j: 1\leq i<j\leq n)$ is the Stanley-Reisner ring of the simplicial complex whose facets are $\{y_1\},\ldots, \{y_n\}$ which is Cohen-Macaulay, thus $S/(I,z)$ is also Cohen-Macaulay of \[\dim(S/(I,z))=\dim  K[y_1,\ldots,y_n]/(y_iy_j: 1\leq i<j\leq n)+1=2.\] 

By applying Depth Lemma in (\ref{sir1}), it follows that 
$S/J$ is Cohen-Macaulay of $\dim (S/J)=2, $ thus $\projdim(S/I)=\projdim(S/J)=n.$

From the exact sequence (\ref{sir1}), we get the following exact $\Tor$--sequence:
\[
\Tor_n \Big(\frac{S}{J}(-1), K\Big)_{n+2}\rightarrow \Tor_n \Big(\frac{S}{I}, K\Big)_{n+2} \rightarrow \Tor_n \Big(\frac{S}{(I,z)}, K\Big)_{n+2}.
\]
Since $S/(I,z)\cong K[x,y_1,\ldots,y_n]/(y_iy_j: 1\leq i<j\leq n)$ and  the ideal $(y_iy_j: 1\leq i<j\leq n)$ has linear quotients, thus it has a linear resolution, 
it follows that  $\Tor_n \Big(\frac{S}{(I,z)}, K\Big)_{n+2}=0$. We have $\Tor_n \Big(\frac{S}{J}(-1), K\Big)_{n+2}=\Tor_n \Big(\frac{S}{J}, K\Big)_{n+1}$. Therefore, in order to get the desired vanishing of $\beta_{n,n+2}(S/I)$ it is enough to show that  
$\Tor_n \Big(\frac{S}{J}, K\Big)_{n+1}=0$. 

By using \cite[Lemma 12.1]{S}, a Gr\"obner basis for $J$ with respect to reverse lexicographic order induced by  $x>y_1>\ldots >y_n>z$, is given by \[\{y_iy_j-zx, x(y_k-y_n), x(y_n^2-xz): 1\leq i<j\leq n, 1\leq k\leq n-1\}.\] If we change the order of variables to $z>y_1>...>y_n>x$, the Gr\"obner basis for $J$ does not change. We can consider the following exact sequence:

\begin{equation}\label{sir2}
0\rightarrow \frac{S}{J:x}(-1) \stackrel{x}{\rightarrow} \frac{S}{J} \rightarrow \frac{S}{(J,x)} \rightarrow 0.
\end{equation}
From  (\ref{sir2}), we get the exact sequence
\[
\Tor_n \Big(\frac{S}{J:x}(-1), K\Big)_{n+1}\rightarrow \Tor_n \Big(\frac{S}{J}, K\Big)_{n+1} \rightarrow \Tor_n \Big(\frac{S}{(J,x)}, K\Big)_{n+1}.
\] Note that $\Tor_n \Big(\frac{S}{J:x}(-1), K\Big)_{n+1}=\Tor_n \Big(\frac{S}{J:x}, K\Big)_{n}=0.$ Indeed, by using again \cite[Lemma 12.1]{S}, we derive that $J:x$ is generated by the regular sequence $y_1-y_n,y_2-y_n,\ldots,y_{n-1}-y_n,y_n^2-xz,$ thus the minimal free  resolution of 
$S/J:x$ is provided by the Koszul complex. The last free module in the resolution is generated in degree $n+1,$ hence 
$\Tor_n \Big(\frac{S}{J:x}, K\Big)_{n}=0.$

 In addition, 
$S/(J,x)\cong K[y_1,\ldots,y_n,z](y_iy_j:1\leq i<j\leq n)$ is Cohen-Macaulay of dimension $2,$ thus $\projdim S/(J,x)=n-1$, so 
$\Tor_n \Big(\frac{S}{(J,x)}, K\Big)_{n+1}=0$. Therefore, $\Tor_n \Big(\frac{S}{J}, K\Big)_{n+1}=0$ which yields  $\beta_{n, n+2}(\frac{S}{I})=0$.
\end{proof}

\begin{Remark}\label{Rem2}{\em
We have seen in  the above theorem  that $S/I$ is a Gorenstein algebra. By Proposition~\ref{Hilbert},  the degree of the numerator of the Hilbert series is $3$ and the initial degree of $I$ is $2$, thus the algebra $S/I$ is a nearly extremal Gorenstein algebra in the sense of  \cite{Kumar}.
}
\end{Remark}

A homogeneous  ideal $I$ in a polynomial ring $R$ which is generated in degree $d$ is called \emph{linearly related} if its syzygy module 
is generated by linear relations. In other words, $I$ is linearly related if $\beta_{1j}(I)=\beta_{2j}(R/I)=0$ if $j\geq d+2.$  The next proposition shows that the join-meet ideal of the diamond lattice $D_{n+2}$ is not linearly related for every $n\geq 3.$

\begin{Proposition}\label{notlin}
Let $I\subset S=K[L]$ be the join-meet ideal of $D_{n+2}.$ Then $\beta_{24}(S/I) > 0.$
\end{Proposition}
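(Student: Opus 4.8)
The plan is to exhibit an explicit minimal first syzygy of $I$ of degree $4$. Present $I$ by the free module $F_0=\bigoplus_{1\le i<j\le n}S(-2)e_{ij}$, $e_{ij}\mapsto f_{ij}$, and set $\Omega=\Syz_1(I)=\ker(F_0\to I)$. Since the $f_{ij}$ are $K$-linearly independent (their monomials $y_iy_j$ are distinct), $\Omega_j=0$ for $j\le 2$, hence $\Omega\subseteq\mm F_0$; therefore $F_0\to I$ is part of a minimal free resolution and $\beta_{2,4}(S/I)=\beta_{1,4}(I)=\dim_K\bigl(\Omega_4/(S_1\cdot\Omega_3)\bigr)$, the last equality using $(\mm\Omega)_4=S_1\Omega_3$ because $\Omega_{\le 2}=0$. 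So it suffices to produce an element of $\Omega_4$ that is not an $S_1$-linear combination of linear syzygies of $I$.

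The key preliminary fact is a description of the linear syzygies: every $(\ell_{ij})\in\Omega_3$ has all its entries $\ell_{ij}$ in the span $\langle y_1,\dots,y_n\rangle\subseteq S_1$. To see this, write $\ell_{ij}=a_{ij}x+b_{ij}z+p_{ij}$ with $p_{ij}$ a form in the $y$'s; the relation $\sum_{i<j}\ell_{ij}f_{ij}=0$ rewrites as $\sum_{i<j}\ell_{ij}\,y_iy_j=\bigl(\sum_{i<j}\ell_{ij}\bigr)xz$, and comparing the coefficients of the monomials $xy_iy_j$ (which occur only on the left, since every monomial on the right is divisible by $z$) forces $a_{ij}=0$ for all $i<j$; likewise, comparing coefficients of $zy_iy_j$ gives $b_{ij}=0$. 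Consequently every element of $S_1\cdot\Omega_3$ has all its entries in $S_1\cdot\langle y_1,\dots,y_n\rangle=\langle\,xy_b,\ y_ay_b,\ zy_b\,\rangle$, which is a complement of $\langle x^2,xz,z^2\rangle$ in $S_2$; in particular, no entry of such an element involves the monomial $xz$.

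To finish, choose two distinct pairs $\{a,b\}\neq\{c,d\}$ in $\{1,\dots,n\}$ — possible because $\binom n2\ge 3$ for $n\ge 3$ — and take the Koszul syzygy $\sigma=f_{cd}\,e_{ab}-f_{ab}\,e_{cd}\in F_0$. It is homogeneous of degree $4$ and lies in $\Omega$ since $f_{cd}f_{ab}-f_{ab}f_{cd}=0$; but its $e_{ab}$-entry equals $f_{cd}=y_cy_d-xz$, which involves the monomial $xz$, so by the previous paragraph $\sigma\notin S_1\cdot\Omega_3$. Hence $\Omega_4/(S_1\cdot\Omega_3)\neq 0$, i.e. $\beta_{2,4}(S/I)>0$. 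All the computations are routine; the only real content — and the point I would write most carefully — is the reduction $\beta_{2,4}(S/I)=\dim_K\bigl(\Omega_4/(S_1\Omega_3)\bigr)$ together with the monomial observation that the linear strand of $I$ uses only $y$-linear coefficients, which is precisely what prevents the Koszul syzygies from being reducible.
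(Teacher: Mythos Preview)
Your argument is correct and takes a genuinely different route from the paper's. The paper proceeds by contradiction via the short exact sequence $0\to S/(I{:}xz)(-2)\to S/I\to S/(I,xz)\to 0$: assuming $\beta_{2,4}(S/I)=0$, the long exact Tor sequence yields an isomorphism $\Tor_2(S/(I,xz),K)_4\cong\Tor_1(S/(I{:}xz),K)_2$; the left side is computed from the tensor decomposition $S/(I,xz)\cong K[y]/(y_iy_j)\otimes_K K[x,z]/(xz)$ to be $\binom n2$, while the right side equals $1$ because $I{:}xz$ is the complete intersection $(y_1-y_n,\dots,y_{n-1}-y_n,\,y_n^2-xz)$ --- a contradiction for $n\ge 3$. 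Your approach is instead explicit and syzygy-theoretic: a monomial comparison shows that every linear syzygy of $I$ has coefficients only in $\langle y_1,\dots,y_n\rangle$, so no entry of an element of $S_1\Omega_3$ can contain the monomial $xz$; the Koszul relation $f_{cd}\,e_{ab}-f_{ab}\,e_{cd}$ then visibly escapes the linear strand. This is more elementary and self-contained (it does not use the earlier structural analysis of $I{:}xz$ or of $S/(I,xz)$) and it produces a concrete witness to the nonlinear syzygy; the paper's argument, by contrast, recycles the multiplication-by-$xz$ machinery already developed and ties the proof back into the Hilbert-series and colon-ideal computations of the section.
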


\begin{proof}
We have seen in the proof of Theorem~\ref{Gorenstein} that the ideal $I:xz=(I:z):x=J:x$ is generated by the regular sequence 
$y_1-y_n,y_2-y_n,\ldots,y_{n-1}-y_n,y_n^2-xz$. Let us consider the exact sequence
\begin{equation}\label{eq4}
0\to \frac{S}{I:xz}(-2)\stackrel{xz}{\rightarrow}\frac{S}{I}\to \frac{S}{(I,xz)}\to 0
\end{equation}
and assume that $\beta_{24}(S/I)=\dim_K\Tor_2(S/I,K)_4=0.$ From the exact sequence (\ref{eq4}), we derive that
\[0\to \Tor_2\left(\frac{S}{(I,xz)},K\right)_4\to \Tor_1\left(\frac{S}{I:xz},K\right)_2\to \Tor_1\left(\frac{S}{I},K\right)_4=0
\]
is also an exact sequence. Therefore, we get
\[\Tor_2\left(\frac{S}{(I,xz)},K\right)_4\cong \Tor_1\left(\frac{S}{I:xz},K\right)_2,
\] which implies that {\small
\[\beta_{24}\left(\frac{S}{(I,xz)}\right)=\dim_K \Tor_2\left(\frac{S}{(I,xz)},K\right)_4=\dim_K \Tor_1\left(\frac{S}{I:xz},K\right)_2=\beta_{12}\left(\frac{S}{I:xz}\right)=1.
\]}
On the other hand, we have 
\[\frac{S}{(I,xz)}\cong \frac{K[y_1,\ldots,y_n]}{(y_iy_j: 1\leq i<j\leq n)}\otimes_K\frac{K[x,z]}{(xz)}\]
which implies that 
\[\beta_{24}\left(\frac{S}{(I,xz)}\right)=\beta_{12}\left(\frac{K[y_1,\ldots,y_n]}{(y_iy_j: 1\leq i<j\leq n)}\right)
=\binom{n}{2}.\]
Consequently, $\binom{n}{2}=1,$ thus, $n=2,$ a contradiction.
\end{proof}

\section{A lower bound for the regularity of the join-meet ideal of a modular non-distributive lattice}
\label{two}

 Let $L$ be a finite  lattice and $I_{L}\subset S=K[x_a : a\in L]$ its associated join-meet ideal. A sublattice $L^\prime$ of $L$ is called \textit{induced} if it has the following property: for every $a,b\in L,$ if $a\vee b, a\wedge b\in L^\prime,$ then $a,b\in L^\prime.$

Let us recall that, given the graded $K$--algebras $R^\prime\subset R,$ $R^\prime$ is an \textit{algebra retract} of $R$ if there exists a surjective homomorphism of graded algebras $\varepsilon: R\to R^\prime$ such that the restriction of $\varepsilon$ to $R^\prime$ is the identity of $R^\prime.$

\begin{Proposition}\label{retr}
Let $L$ be a finite lattice, $L^\prime$ an induced sublattice in $L$, and let  $I_{L^\prime}\subset K[x_a: a\in L^\prime]$ be its associated ideal. 
Then $R^\prime=S^\prime/I_{L^\prime}$ is an algebra retract of $R=S/I_L.$

\end{Proposition}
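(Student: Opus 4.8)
The plan is to construct the retraction map explicitly and verify it descends to the quotient rings. Let $L$ be a finite lattice and $L'$ an induced sublattice. First I would define a $K$-algebra homomorphism $\varepsilon\colon S=K[x_a:a\in L]\to S'=K[x_a:a\in L']$ on the variables. The natural candidate is to fix a retraction $\rho\colon L\to L'$ of the underlying sets: for $a\in L'$ set $\rho(a)=a$, and for $a\notin L'$ we need to send $x_a$ to something in $S'$. A clean choice is $\varepsilon(x_a)=x_a$ if $a\in L'$ and $\varepsilon(x_a)$ equal to a suitable product or monomial in the $x_b$ with $b\in L'$ — but to keep $\varepsilon$ graded of degree $0$ on generators, the simplest workable option is to pick, for each $a\notin L'$, a single element $r(a)\in L'$ and set $\varepsilon(x_a)=x_{r(a)}$. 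Then $\varepsilon$ is a surjective graded $K$-algebra homomorphism whose restriction to $S'$ is the identity, so $S'$ is an algebra retract of $S$.

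The crux is to choose $r\colon L\setminus L'\to L'$ so that $\varepsilon(I_L)\subseteq I_{L'}$; then $\varepsilon$ induces a surjection $\bar\varepsilon\colon R=S/I_L\to R'=S'/I_{L'}$ which restricts to the identity on $R'$, giving the algebra retract. So the main obstacle is the combinatorial/lattice-theoretic step: for each incomparable pair $a,b\in L$, the generator $f_{ab}=x_ax_b-x_{a\wedge b}x_{a\vee b}$ must map into $I_{L'}$. I would analyze cases according to how many of $a,b,a\wedge b,a\vee b$ lie in $L'$. The key leverage is precisely the \emph{induced} hypothesis: if both $a\wedge b$ and $a\vee b$ lie in $L'$, then $a,b\in L'$ as well, so that case collapses to a generator of $I_{L'}$ (or to $0$ if $a,b$ happen to be comparable in $L'$ — but incomparability is inherited). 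The remaining cases have at least one of $a\wedge b$, $a\vee b$ outside $L'$. Here I would try to define $r$ via a fixed order-preserving retraction, e.g. $r(a)=\bigvee\{c\in L': c\le a\}$ (which lies in $L'$ provided $L'$ is closed under the joins of $L$, a consequence one should extract from the induced condition, or else use a closure operator argument), and then check that $\varepsilon(f_{ab})$ is either zero or again a binomial of the required shape $x_{r(a)}x_{r(b)}-x_{r(a)\wedge r(b)}x_{r(a)\vee r(b)}$ up to reordering, using that $r$ commutes appropriately with $\wedge$ and $\vee$ on the relevant elements.

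Concretely, the verification I expect to carry out is: (1) $r$ is well-defined with values in $L'$; (2) for $a\in L'$, $r(a)=a$; (3) $r(a)\wedge r(b)$ and $r(a)\vee r(b)$, computed in $L$, land in $L'$ and agree with the meet/join taken in $L'$ (here one uses that $L'$ is a sublattice, so meets and joins of elements of $L'$ stay in $L'$); and (4) applying $\varepsilon$ to $f_{ab}$ yields $x_{r(a)}x_{r(b)} - x_{r(a\wedge b)}x_{r(a\vee b)}$, which one then rewrites as $x_{r(a)}x_{r(b)} - x_{r(a)\wedge r(b)}x_{r(a)\vee r(b)}$ — if $r(a)$ and $r(b)$ are incomparable in $L'$ this is a generator of $I_{L'}$, and if they are comparable the binomial vanishes because $\{r(a),r(b)\}=\{r(a)\wedge r(b),r(a)\vee r(b)\}$. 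Step (3)–(4) is where the induced sublattice hypothesis does the real work, since without it the meet/join of the projected elements need not be the projection of the meet/join, and the binomial relations would not be preserved.

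Once $\varepsilon(I_L)\subseteq I_{L'}$ is established, the induced map $\bar\varepsilon\colon R\to R'$ is a well-defined surjective graded $K$-algebra homomorphism, and since the composite $R'\hookrightarrow R\xrightarrow{\bar\varepsilon} R'$ is induced by $\varepsilon|_{S'}=\mathrm{id}_{S'}$, it is the identity on $R'$. This exhibits $R'=S'/I_{L'}$ as an algebra retract of $R=S/I_L$, completing the proof.
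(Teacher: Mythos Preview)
Your plan has a genuine gap at the construction of the retraction. You insist on sending each $x_a$ with $a\notin L'$ to some variable $x_{r(a)}$, and then you need $r\colon L\to L'$ to satisfy $r(a\wedge b)=r(a)\wedge r(b)$ and $r(a\vee b)=r(a)\vee r(b)$ so that $\varepsilon(f_{ab})$ becomes either a generator of $I_{L'}$ or zero. In other words, you need a lattice-homomorphism retraction of $L$ onto $L'$. Such a retraction does not exist in general, even for induced sublattices. For instance, in the pentagon $N_5=\{0<b<c<1,\;0<a<1\}$ the interval $L'=\{b,c\}$ is an induced sublattice, but any order-preserving map $r$ fixing $L'$ must send $0\mapsto b$ and $1\mapsto c$; then from $a\vee b=1$ one is forced to $r(a)=c$, while from $a\wedge c=0$ one is forced to $r(a)=b$, a contradiction. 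Your candidate $r(a)=\bigvee\{c\in L':c\le a\}$ is in general only a meet-preserving co-closure operator, not a lattice homomorphism (and it is not even defined when no element of $L'$ lies below $a$). So step~(4) cannot be carried out as stated.

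The paper avoids all of this by the much simpler choice $\varepsilon(x_a)=0$ for $a\notin L'$ (and $\varepsilon(x_a)=x_a$ for $a\in L'$). With this choice the verification that $\varepsilon(I_L)\subseteq I_{L'}$ is immediate: if $a,b\in L'$ then $f_{ab}$ is already a generator of $I_{L'}$; if one of $a,b$ lies outside $L'$ then $x_ax_b\mapsto 0$, and the \emph{induced} hypothesis (in its contrapositive form) says that then one of $a\wedge b,\,a\vee b$ also lies outside $L'$, so $x_{a\wedge b}x_{a\vee b}\mapsto 0$ as well, and $f_{ab}\mapsto 0$. Once $\varepsilon(I_L)\subseteq I_{L'}$ is known, your final paragraph goes through unchanged. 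You had the right case analysis and correctly identified where the induced hypothesis enters; you simply overlooked that $0\in S'$ is the natural target for the superfluous variables, rather than some $x_{r(a)}$.
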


\begin{proof}
First, we show that $I_L\cap S^\prime=I_{L^\prime}.$ Obviously, $I_{L^\prime}\subset I_L\cap S^\prime. $ Let $f\in I_{L}\cap S^\prime$. Then 
$f=\sum_{a,b}(x_ax_b-x_{a\vee b}x_{a\wedge b})g_{ab}$, where the sum is taken over $a,b\in L,$ incomparable, and $g_{ab}\in S.$ 

We map
$x_a\mapsto 0$ for all $a\in L\setminus L^\prime. $ If $a\in L\setminus L^\prime,$ that is, $x_a\mapsto 0,$ then $a\vee b\notin L^\prime$
or $a\wedge b\notin L^\prime$ since $L^\prime$ is induced in $L.$ Hence, $x_{a\vee b}\mapsto 0$ or $x_{a\wedge b} \mapsto 0.$
Conversely, if $a\vee b\notin L^\prime$ or $a\wedge b\notin L^\prime,$ then $a\notin L^\prime$ or $b\notin L^\prime$ since 
$L^\prime$ is a sublattice in $L.$

Consequently, after mapping 
$x_a$ to $0$ for any $a\in L\setminus L^\prime,$ we get \[f=\sum_{b,c}(x_bx_c-x_{b\vee c}x_{b\wedge c})g^\prime_{bc}\] where the sum is taken over $b,c\in L^\prime$ incomparable, and $g^\prime_{bc}\in S^\prime,$ thus $f\in I_{L^\prime}.$

As $I_L\cap S^\prime=I_{L^\prime},$ we have an injective $K$--algebra homomorphism $R^\prime\to R$ and the map
$\varepsilon :R\to R^\prime$ induced by $x_a\mapsto 0$ for $a\in L\setminus L^\prime$ is a retraction map.

\end{proof}

\begin{Theorem}\label{mainthm}
Let $L$ be a finite modular non-distributive lattice $L$ and $I_L\subset S=K[L]$ its join-meet ideal.
Then 
\begin{itemize}
	\item [\emph{(i)}] $\reg{S/I_L}\geq 3;$
	\item [\emph{(ii)}] $\beta_{24}(S/I_L)>0$, thus $I_L$ is not a linearly related ideal.
\end{itemize}
 In particular, $S/I_L$ does not have a linear resolution.
\end{Theorem}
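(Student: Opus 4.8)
The plan is to reduce the general modular non-distributive case to the diamond case $D_{n+2}$ already analyzed in Section~\ref{one}, using the algebra retract machinery of Proposition~\ref{retr}. First, by \cite[Lemma 1.2]{EnHi} together with the remark recalled in the introduction, any finite modular non-distributive lattice $L$ contains a diamond sublattice $L'' \cong D_5$ with $\rank\max L'' - \rank\min L'' = 2$; the elements strictly between $\min L''$ and $\max L''$ are pairwise incomparable and of the same rank. One then wants to \emph{enlarge} $L''$ to an \emph{induced} sublattice $L'$ of $L$: set $a = \min L''$, $e = \max L''$, and let $L'$ consist of $a$, $e$, together with \emph{all} elements $y$ of $L$ satisfying $a < y < e$. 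The key point to verify is that this $L'$ is a sublattice of $L$ (closed under $\wedge,\vee$) and that it is induced in the sense of Section~\ref{two}: if $u\vee v, u\wedge v\in L'$ then $u,v\in L'$. Closure and inducedness both follow from the rank identity $\rank u + \rank v = \rank(u\wedge v) + \rank(u\vee v)$ valid in a modular lattice — if $u\wedge v \ge a$ and $u\vee v\le e$, then since all of $a,e$ and the intermediate elements sit in a rank-interval of length $2$, every element between $u\wedge v$ and $u\vee v$ lies between $a$ and $e$, so $u,v\in L'$; and the join/meet of two intermediate elements is either $a$ or $e$ (they cannot stay strictly in between, again by the rank count, since two distinct rank-$1$ elements of the interval have meet of rank $0$ and join of rank $2$). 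Thus $L' \cong D_{m+2}$ for some $m\ge 3$ (we have $m\ge 3$ since $D_5\subseteq L'$, and in fact $L'$ has at least the three intermediate elements coming from the diamond).

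Next, apply Proposition~\ref{retr} to the pair $L'\subset L$: the ring $R' = K[L']/I_{L'}$ is an algebra retract of $R = K[L]/I_L$. Algebra retracts behave well with respect to homological invariants: if $R'$ is an algebra retract of $R$ then $\beta_{ij}^{R'}(K)\le \beta_{ij}^R(K)$ and, more relevantly here, the graded Betti numbers of the defining ideals satisfy $\beta_{ij}(R')\le\beta_{ij}(R)$ and $\reg R'\le\reg R$. (This is standard; see e.g.\ the results of Ohtani, or the discussion of retracts in \cite{HH10}, applied to the surjection $\varepsilon\colon R\to R'$ with section the inclusion $R'\hookrightarrow R$.) Concretely, one gets $\reg(S/I_L)\ge\reg(S'/I_{L'})$ and $\beta_{24}(S/I_L)\ge\beta_{24}(S'/I_{L'})$, where $S' = K[L']$.

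Finally, invoke the computations for the diamond lattice: by Proposition~\ref{CM}(ii) we have $\reg(S'/I_{L'}) = 3$ since $L'\cong D_{m+2}$ with $m\ge 3$, which gives (i); and by Proposition~\ref{notlin} we have $\beta_{24}(S'/I_{L'}) > 0$, which gives (ii). The statement that $I_L$ is not linearly related is then immediate from the definition (a linearly related ideal generated in degree $2$ has $\beta_{2j}(S/I_L) = 0$ for all $j\ge 4$), and since $\reg(S/I_L)\ge 3 > 2$ the ideal $I_L$ cannot have a $2$-linear resolution either, proving the ``in particular'' clause.

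The main obstacle is the middle paragraph's extraction of the correct \emph{induced} sublattice. It is not enough to take the $D_5$ from \cite{EnHi}; one must check that the ``saturation'' $L' = \{a, e\}\cup\{y : a < y < e\}$ is genuinely a sublattice and genuinely induced, and this is exactly where the rank function of the modular lattice and the condition $\rank e - \rank a = 2$ do the work. One should be a little careful: a priori $L'$ as defined need not be a sublattice of an arbitrary lattice, but modularity plus the length-$2$ constraint forces any meet or join of two elements of $L'$ back into $L'$. Once this lattice-theoretic lemma is in hand, the homological part is a formal consequence of Proposition~\ref{retr} and the explicit diamond computations, so no further serious work is needed.
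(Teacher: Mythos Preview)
Your proposal is correct and follows essentially the same route as the paper's proof: find a rank-$2$ diamond via \cite[Lemma~1.2]{EnHi}, enlarge it to the full interval $[a,e]$ (which is automatically an induced sublattice and, by the rank identity, isomorphic to some $D_{m+2}$ with $m\ge 3$), apply Proposition~\ref{retr}, and then use the Betti-number inequality for algebra retracts together with Proposition~\ref{CM}(ii) and Proposition~\ref{notlin}. The only remark is that the Betti inequality you invoke is \cite[Corollary~2.5]{OHH} (Ohsugi--Herzog--Hibi), not ``Ohtani'', and that the inducedness of an interval needs no rank argument---the rank function is only needed to see that the interval is a diamond $D_{m+2}$.
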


\begin{proof} Since $L$ is a modular non-distributive lattice, by  \cite[Lemma 1.2]{EnHi}, there is a diamond sublattice $L_1$ of $L$ such that 
$\rank \max L_1-\rank \min L_1=2.$ Let $x=\max L_1$ and $z=\min L_1.$ We consider the induced sublattice $L^\prime$ of $L$  with 
$x=\max L^\prime$ and $z=\min L^\prime.$ Then $L^\prime$ is a diamond sublattice of $L.$
 Let $I_{L^\prime}\subset S^\prime=K[L^\prime]$ its associated ideal. By Proposition~\ref{retr}, we know that $S^\prime/I_{L^\prime}$ is an algebra retract of $S/I_L$. By \cite[Corollary 2.5]{OHH}, 
we have $\beta_{ij}(S^\prime/I_{L^\prime})\leq \beta_{ij}(S/I_L)$ for all $i,j.$ In particular, 
$\reg(S^\prime/I_{L^\prime})\leq \reg(S/I_L).$ By Proposition~\ref{CM}, we have $\reg(S^\prime/I_{L^\prime})=3,$ thus the statement (i) holds. By using Proposition~\ref{notlin}, we get $\beta_{24}(S/I_L)\geq \beta_{24}(S^\prime/I_{L^\prime})>0,$ thus we have (ii).
\end{proof}


\begin{Example}
{\em Here there are some examples of regularity $3.$}

\begin{figure}[hbt]
\begin{center}
\psset{unit=0.35cm}
\begin{pspicture}(0.3,-5.5)(3,7)

\psline(-9,3)(-11,1)
\psline(-9,3)(-9,1)
\psline(-9,3)(-7,1)
\psline(-11,1)(-9,-1)
\psline(-9,1)(-9,-1)
\psline(-7,1)(-9,-1)
\psline(-9,3)(-11,5)
\psline(-13,3)(-11,5)
\psline(-13,3)(-11,1)
\psline(-11,1)(-13,-1)
\psline(-13,-1)(-11,-3)
\psline(-11,-3)(-9,-1)

\rput(-9,3){$\bullet$}
\rput(-11,1){$\bullet$}
\rput(-9,1){$\bullet$}
\rput(-7,1){$\bullet$}
\rput(-9,-1){$\bullet$}
\rput(-11,-3){$\bullet$}
\rput(-13,-1){$\bullet$}
\rput(-13,3){$\bullet$}
\rput(-11,5){$\bullet$}

\psline(2,3)(0,1)
\psline(2,3)(2,1)
\psline(2,3)(4,1)
\psline(0,1)(2,-1)
\psline(2,1)(2,-1)
\psline(4,1)(2,-1)
\psline(2,3)(0,5)
\psline(-2,3)(0,5)
\psline(-2,3)(0,1)
\psline(0,1)(-2,-1)
\psline(-2,-1)(0,-3)
\psline(0,-3)(2,-1)
\psline(-4,1)(-2,3)
\psline(-4,1)(-2,-1)

\rput(2,3){$\bullet$}
\rput(0,1){$\bullet$}
\rput(2,1){$\bullet$}
\rput(4,1){$\bullet$}
\rput(2,-1){$\bullet$}
\rput(0,-3){$\bullet$}
\rput(-2,-1){$\bullet$}
\rput(-2,3){$\bullet$}
\rput(0,5){$\bullet$}
\rput(-4,1){$\bullet$}

\psline(13,3)(11,1)
\psline(13,3)(13,1)
\psline(13,3)(15,1)
\psline(11,1)(13,-1)
\psline(13,1)(13,-1)
\psline(15,1)(13,-1)
\psline(13,3)(11,5)
\psline(9,3)(11,5)
\psline(9,3)(11,1)
\psline(11,1)(9,-1)
\psline(9,-1)(11,-3)
\psline(11,-3)(13,-1)
\psline(11,5)(11,1)

\rput(13,3){$\bullet$}
\rput(11,1){$\bullet$}
\rput(13,1){$\bullet$}
\rput(15,1){$\bullet$}
\rput(13,-1){$\bullet$}
\rput(11,-3){$\bullet$}
\rput(9,-1){$\bullet$}
\rput(9,3){$\bullet$}
\rput(11,5){$\bullet$}
\rput(11,3){$\bullet$}
\end{pspicture}
\end{center}
\caption{Regularity $S/I_L$=3}\label{}
\end{figure}
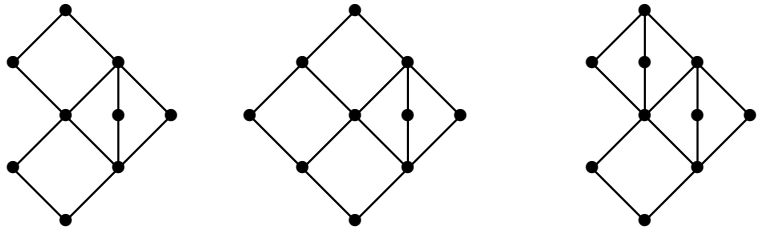
\end{Example}

\begin{Example}
{\em This example shows two  lattices of regularity $5.$}

\begin{figure}[hbt]
\begin{center}
\psset{unit=0.35cm}
\begin{pspicture}(-3.3,-5.5)(0,7)

\psline(-9,3)(-11,1)
\psline(-9,3)(-7,1)
\psline(-11,1)(-9,-1)
\psline(-7,1)(-9,-1)
\psline(-9,3)(-11,5)
\psline(-13,3)(-11,5)
\psline(-13,3)(-11,1)
\psline(-11,1)(-13,-1)
\psline(-13,-1)(-11,-3)
\psline(-11,-3)(-9,-1)
\psline(-11,5)(-11,-3)

\rput(-9,3){$\bullet$}
\rput(-11,3){$\bullet$}
\rput(-11,-1){$\bullet$}
\rput(-11,1){$\bullet$}
\rput(-7,1){$\bullet$}
\rput(-9,-1){$\bullet$}
\rput(-11,-3){$\bullet$}
\rput(-13,-1){$\bullet$}
\rput(-13,3){$\bullet$}
\rput(-11,5){$\bullet$}

\psline(8,3)(6,1)
\psline(8,3)(8,1)
\psline(8,3)(10,1)
\psline(6,1)(8,-1)
\psline(8,1)(8,-1)
\psline(10,1)(8,-1)
\psline(8,3)(6,5)
\psline(4,3)(6,5)
\psline(4,3)(6,1)
\psline(6,1)(4,-1)
\psline(4,-1)(6,-3)
\psline(6,-3)(8,-1)
\psline(6,5)(6,1)
\psline(6,1)(6,-3)

\rput(8,3){$\bullet$}
\rput(6,1){$\bullet$}
\rput(8,1){$\bullet$}
\rput(10,1){$\bullet$}
\rput(8,-1){$\bullet$}
\rput(6,-3){$\bullet$}
\rput(4,-1){$\bullet$}
\rput(4,3){$\bullet$}
\rput(6,5){$\bullet$}
\rput(6,3){$\bullet$}
\rput(6,-1){$\bullet$}
\end{pspicture}
\end{center}
\caption{Regularity $S/I_L$=5}\label{}
\end{figure}
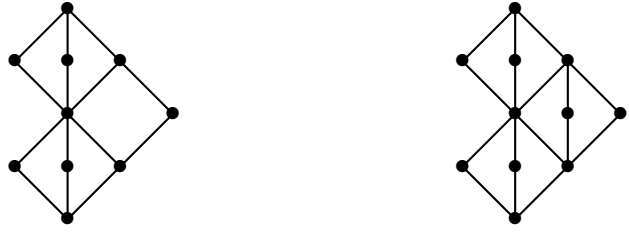

\end{Example}

\begin{Example}{\em 
The following class of non-distributive modular lattices was consi\-dered in \cite{EnHi}.

\begin{figure}[hbt]
\begin{center}
\psset{unit=0.5cm}
\begin{pspicture}(-1,-11)(5,2)

\psline(-9,1)(-11,-1)
\psline(-9,1)(-7,-1)
\psline(-9,-3)(-7,-1)
\psline(-11,-1)(-9,-3)
\psline(-5,-3)(-7,-5)
\psline(-7,-5)(-5,-7)
\psline(-5,-7)(-3,-5)
\psline(-3,-5)(-5,-3)
\psline(-3,-9)(-1,-7)
\psline(-1,-7)(1,-9)
\psline(1,-9)(-1,-11)
\psline(-1,-11)(-3,-9)
\psline[linestyle=dotted](-9,-3)(-7,-5)
\psline[linestyle=dotted](-7,-1)(-5,-3)
\psline[linestyle=dotted](-5,-7)(-3,-9)
\psline[linestyle=dotted](-3,-5)(-1,-7)

\rput(-9,1){$\bullet$}
\put(-8.5,1){$y_n$}
\rput(-11,-1){$\bullet$}
\put(-12.5,-1){$x_n$}
\rput(-7,-1){$\bullet$}
\put(-6.5,-1){$y_{n-1}$}
\rput(-9,-3){$\bullet$}
\put(-11,-3){$x_{n-1}$}
\rput(-5,-3){$\bullet$}
\put(-4.5,-3){$y_{k+1}$}
\rput(-7,-5){$\bullet$}
\put(-9,-5){$x_{k+1}$}
\rput(-3,-5){$\bullet$}
\put(-2.5,-5){$y_k$}
\rput(-5,-7){$\bullet$}
\put(-6.5,-7){$x_k$}
\rput(-1,-7){$\bullet$}
\put(-0.5,-7){$y_2$}
\rput(-3,-9){$\bullet$}
\put(-4.5,-9){$x_2$}
\rput(1,-9){$\bullet$}
\put(1.5,-9){$y_1$}
\rput(-1,-11){$\bullet$}
\put(-2.5,-11){$x_1$}

\rput(-7,-9){(a)}
\rput(-7,-10){Lattice $D$}

\psline(6,1)(4,-1)
\psline(6,1)(8,-1)
\psline(6,-3)(8,-1)
\psline(4,-1)(6,-3)
\psline(10,-3)(8,-5)
\psline(8,-5)(10,-7)
\psline(10,-7)(12,-5)
\psline(12,-5)(10,-3)
\psline(12,-9)(14,-7)
\psline(14,-7)(16,-9)
\psline(16,-9)(14,-11)
\psline(14,-11)(12,-9)
\psline[linestyle=dotted](6,-3)(8,-5)
\psline[linestyle=dotted](8,-1)(10,-3)
\psline[linestyle=dotted](10,-7)(12,-9)
\psline[linestyle=dotted](12,-5)(14,-7)
\psline(10,-3)(10,-5)
\psline(10,-5)(10,-7)

\rput(6,1){$\bullet$}
\put(6.5,1){$y_n$}
\rput(4,-1){$\bullet$}
\put(2.5,-1){$x_n$}
\rput(8,-1){$\bullet$}
\put(8.5,-1){$y_{n-1}$}
\rput(6,-3){$\bullet$}
\put(4,-3){$x_{n-1}$}
\rput(10,-3){$\bullet$}
\put(10.5,-3){$y_{k+1}$}
\rput(8,-5){$\bullet$}
\put(6,-5){$x_{k+1}$}
\rput(12,-5){$\bullet$}
\put(12.5,-5){$y_k$}
\rput(10,-7){$\bullet$}
\put(8.5,-7){$x_k$}
\rput(14,-7){$\bullet$}
\put(14.5,-7){$y_2$}
\rput(12,-9){$\bullet$}
\put(10.5,-9){$x_2$}
\rput(16,-9){$\bullet$}
\put(16.5,-9){$y_1$}
\rput(14,-11){$\bullet$}
\put(12.5,-11){$x_1$}
\rput(10,-5){$\bullet$}
\put(10.3,-5){$z$}

\rput(8,-9){(b)}
\rput(8,-10){Lattice $L_k$}

\end{pspicture}
\end{center}
\caption{}\label{chain}
\end{figure}

Let $P$ be the poset which is the sum of a chain and and extra point, and  $D=\MI(P)$ be the corresponding  distributive lattice with the elements labeled as in Figure~\ref{chain} (a). For every $1\leq k\leq n-1,$ we denote by $L_k$ the lattice of Figure~\ref{chain} (b). 
Let $I=I_{L_k}\subset S=K[x_1,\ldots,x_n,y_1,\ldots,y_n,z]$ be the join-meet ideal of $L_k.$ We consider the lexicographic order on $S$
induced by $x_1>\cdots >x_n> y_1>\cdots> y_n>z.$
As it was proved in \cite[Lemma 3.2]{EnHi}, the initial ideal of $I$ with respect to this order is generated by the following set of monomials:
\[
\{x_jy_i: 1\leq i< j\leq n\}\cup\{x_iy_{k+1}: 1\leq i< k\}\cup\{x_ky_j: k+1\leq j\leq n\}
\]
\[
\cup\{x_ix_{k+1}y_j,x_iy_ky_j: 1\leq i< k< k+1 < j\leq n\}\cup\{y_iy_kz: 1\leq i\leq k\}\cup\{x_{k+1}z\}.
\]

Let $J=\ini_<(I).$ We claim that $\reg(S/J)\leq 3.$ Then, $\reg(S/I)\leq 3.$ The opposite inequality follows by Theorem~\ref{mainthm}, thus we get $\reg(S/I)=\reg(S/\ini_<(I))=3.$ We now sketch the proof of inequality $\reg(S/J)\leq 3.$ We consider the following short exact sequences:
\begin{equation}\label{iniseq1}
0\to \frac{S}{J:y_{k+1}}(-1)\stackrel{y_{k+1}}{\rightarrow}\frac{S}{J}\to \frac{S}{(J,y_{k+1})}\to 0
\end{equation}
and 
\begin{equation}\label{iniseq2}
0\to \frac{S}{(J,y_{k+1}):y_k}(-1)\stackrel{y_{k}}{\rightarrow}\frac{S}{(J,y_{k+1})}\to \frac{S}{(J,y_{k+1},y_k)}\to 0.
\end{equation}

We have: 
\[
J:y_{k+1}=(x_1,\ldots,x_k,x_{k+2},\ldots,x_n)+x_{k+1}(y_1,\ldots,y_k,z)+(y_1y_kz,\ldots,y_{k-1}y_kz,y_k^2z),
\]
\[
(J,y_{k+1}):y_k=(y_{k+1},x_{k+1},x_{k+2},\ldots,x_n)+\sum_{j=1}^{k}x_j(y_{k+2},\ldots,y_n)+\sum_{j=2}^k x_{j}(y_1,\ldots,y_{j-1})\] \[ +
(y_1,\ldots,y_k)z,
\]
and
\[
(J,y_{k+1},y_k)=(y_{k+1},y_k) +(x_2y_1)+x_3(y_2,y_1)+\cdots + x_k(y_1,\ldots,y_{k-1})\]
\[
+x_{k+1}(y_1,\ldots,y_{k-1},z)+x_{k+2}(y_1,\ldots,y_{k-1})+x_{k+3}(y_1,\ldots,y_{k-1},y_{k+2})+\cdots\]  \[ + x_n(y_1,\ldots,y_{k-1},y_{k+2},\ldots,y_n)
\]

One may easily check that each of these  ideals have linear quotients, thus they are componentwise linear. Therefore, we get 
\[
\reg \frac{S}{J:y_{k+1}}=2, \reg \frac{S}{(J,y_{k+1}):y_k}=1, \text{ and }\reg\frac{S}{(J,y_{k+1},y_k)}=1.
\]
From the exact sequence (\ref{iniseq2}), we get $\reg S/(J,y_{k+1})=2,$ and, replacing in (\ref{iniseq1}), we derive that $\reg S/J\leq 3.$

}

\end{Example}

The above examples show that there are many lattices of minimal regularity. It would be interesting to find a characterization of the modular non-distributive lattices $L$ for which $\reg S/I_L=3.$

\end{document}